\newtheorem{theorem}{Theorem}
\newtheorem{lemma}[theorem]{Lemma}
\newtheorem{corollary}[theorem]{Corollary}
\newtheorem{observation}[theorem]{Observation}
\newtheorem{proposition}[theorem]{Proposition}
\theoremstyle{definition}
\newtheorem{definition}[theorem]{Definition}
\def\picture#1#2#3{\begin{figure}
\begin{center}
\includegraphics{#2}
\caption{#1}
\def\tmp{#3}
\ifx\tmp\empty\else\label{#3}\fi
\end{center}
\end{figure}}
\def\Rplus{R^{+}}
\def\Rminus{R^{-}}
\def\M{{\mathcal M}}
\def\CSP{\operatorname{CSP}}
\def\Datalog{\operatorname{DATALOG}}
\journal{European Journal of Combinatorics}
\begin{document}

\begin{frontmatter}
\title{Maltsev digraphs have a majority polymorphism}

\tnotetext[abbr]{Abbreviations used: CSP (Constraint Satisfaction Problem) }

\author[uk]{Alexandr Kazda}
\ead{alexak@atrey.karlin.mff.cuni.cz.}
\address[uk]{{Department of Algebra, Charles University,
Sokolovsk\'a 83, 186 75, Praha 8, Czech Republic}}

\begin{abstract}
We prove that when a digraph $G$ has a Maltsev
polymorphism, then $G$ also has a majority polymorphism. We consider the
consequences of this result for the structure of Maltsev digraphs
and the complexity of the Constraint Satisfaction Problem.

\end{abstract}

\begin{keyword}
digraph 
\sep
Maltsev polymorphism
\sep
 majority
\sep
CSP
\sep
 Datalog

\MSC[2010] 08A70\sep 05C99 \sep 68Q17
\end{keyword}

\end{frontmatter}
%\newpage
%
%Proposed running head: Maltsev graphs have majority
%
%Alexandr Kazda
%
%e-mail: alexak@atrey.karlin.mff.cuni.cz.
%
%Department of Algebra, Charles University,\\
%Sokolovsk\'a 83, 186 75, Praha 8, Czech Republic
%
%\newpage

\section{Introduction}
In recent years, a marriage of universal algebra with graph theory brought
about great advances in the study of the Constraint Satisfaction Problem (CSP) and
related areas (see \cite{BJK} or \cite{libor}). 

Given $G$, the problem $\CSP(G)$ with input $H$ consists of deciding whether there exists
a homomorphism from $H$ to $G$. (Note that in the general theory, $G$ and $H$
can be any relational structures, however we will consider only digraphs in
this paper.) An important open question is how to determine the complexity of
$\CSP(G)$ from the properties of $G$. In particular, the famous \emph{dichotomy
conjecture} by Feder and Vardi claims that if $\CSP(G)$ is not polynomial time
solvable, then it is NP-complete (see \cite{FV}). 

At the core of universal algebra's success in describing the complexity of CSP
is the focus on algebras of polymorphisms. 
It turns out that the more polymorphisms $G$ admits, the easier it
is to solve $\CSP(G)$. More precisely, if we have two digraphs $G$ and $G'$ on
the same vertex set and the algebra of polymorphisms of $G'$
is contained in the algebra of polymorphisms of $G$, then $\CSP(G)$ can be
reduced to $\CSP(G')$ in logarithmic space (see Theorem 2.16 in \cite{BJK} for
the idea, \cite{jeavons} for the logspace reduction proper). 

Two often-encountered kinds of polymorphisms are the Maltsev and majority
polymorphism. Existence of either kind of polymorphism guarantees a polynomial
time algorithm for $\CSP(G)$ (for majority, see eg. \cite{DalmauKrokhin}, 
for the Maltsev polymorphism, see \cite{Dalmau}), while
if $G$ has both these polymorphisms then $\CSP(G)$ is even solvable in
deterministic logarithmic space (see \cite{DalmauLarosse}). 
The purpose of this paper is to prove that whenever a digraph $G$ has a Maltsev
polymorphism, then $G$ has a majority polymorphism as well. 
The core idea of our proof is that, given a Maltsev digraph $G$, we factorize
$G$, obtain majority on the factorgraph $G^+$ by induction and then
extend the majority to the original $G$. 

We give an overview of the implications of our result for CSP in the
Conclusions section.

\section{Preliminaries}
While our solution is rather elementary, the reader might still benefit from
understanding the context in which we wrote this paper. A good summary of the
combinatorics of digraph homomorphism can be found in \cite{nesetril}, while
\cite{BJK} presents an overview of the algebraic techniques in CSP and
\cite{burris} provides a good general introduction to universal algebra. 

Through the paper, \emph{digraph} will mean a finite directed graph with loops allowed.
We will allow the null digraph, however the main result does not change if we
demand that $V(G)\neq \emptyset$.

\begin{definition}
Let $G$ be a digraph. The mapping $f:V(G)^n\to V(G)$ is a
\emph{polymorphism} if it is true that whenever we have $(u_1,v_1)$,
$(u_2,v_2)$, \dots, $(u_n,v_n)\in
E(G)$ then 
\[
(f(u_1,\dots,u_n),f(v_1,\dots,v_n))\in E(G).
\]
In this situation, we
will also say that the mapping $f$ is \emph{compatible} with the edge relation $E(G)$.
\end{definition}

\begin{definition}
Let $G,H$ be digraphs. A mapping $f:V(G)\to V(H)$ is called a \emph{homomorphism}
if whenever $(u,v)\in E(G)$, we have $(f(u),f(v))\in E(H)$.
\end{definition}

Given a set of relations $R_1,\dots R_n$, a primitive positive definition of a
relation $S$ is any formula using only variables, existential
quantification, the equality relation, relations $R_1,\dots, R_n$ and
conjunctions. The following 
proposition is a part of the folklore of CSP (we invite the readers to prove
the proposition as an exercise).
\begin{proposition}
Let $G$ be a digraph and $R$ a relation defined by a primitive positive formula using the
relation $E(G)$. Then all polymorphisms of $G$ are compatible with $R$.
\end{proposition}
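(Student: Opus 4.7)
The plan is to prove the proposition by structural induction on the primitive positive formula $\varphi$ defining $R$. The four syntactic building blocks are atomic formulas ($E(G)$ and equality), conjunctions, and existential quantifiers, so I would verify that compatibility with a polymorphism $f$ is preserved by each construction.

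For the base case, compatibility with $E(G)$ is simply the definition of a polymorphism, and compatibility with equality is trivial because $f$ is a function: if $a_i = b_i$ for all $i$, then $f(a_1,\dots,a_n) = f(b_1,\dots,b_n)$. For the conjunction step, suppose $\varphi = \varphi_1 \wedge \varphi_2$ with defined relations $R_1, R_2$, and $f$ is compatible with both. Given tuples $\vec t^{(1)}, \dots, \vec t^{(n)}$ satisfying $\varphi$, each projects onto tuples satisfying $\varphi_1$ and $\varphi_2$ in the appropriate coordinates. Applying $f$ componentwise and using compatibility with each $R_i$ separately shows the resulting tuple still satisfies $\varphi_1 \wedge \varphi_2$, since the operation $f$ acts coordinatewise and thus commutes with taking subtuples.

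The main content is the existential quantifier step. Assume $\varphi(\vec x) = \exists y\, \psi(\vec x, y)$ and let $S$ and $R$ be the relations defined by $\varphi$ and $\psi$. Suppose $f$ is compatible with $R$; we want $f$ compatible with $S$. Take tuples $\vec a^{(1)},\dots,\vec a^{(n)} \in S$. For each $i$ choose a witness $b_i$ so that $(\vec a^{(i)}, b_i) \in R$. Now apply $f$ componentwise to these $n$ extended tuples: compatibility with $R$ gives
\[
\bigl(f(\vec a^{(1)},\dots,\vec a^{(n)}),\, f(b_1,\dots,b_n)\bigr) \in R,
\]
and existentially quantifying out the last coordinate yields $f(\vec a^{(1)},\dots,\vec a^{(n)}) \in S$, as required.

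There is no serious obstacle here: the only place requiring any care is the existential step, where one must make the (finite) choice of witnesses $b_i$ before applying $f$, so that the resulting $f(b_1,\dots,b_n)$ serves as the witness for the image tuple. All other steps are formal bookkeeping about how $f$ acts coordinatewise on tuples and how this interacts with projections and intersections. Induction on the construction of $\varphi$ then finishes the argument.
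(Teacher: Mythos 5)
Your proof is correct and complete; the paper deliberately leaves this proposition as an exercise (``we invite the readers to prove the proposition as an exercise''), and your structural induction on the primitive positive formula --- base cases $E(G)$ and equality, closure under conjunction, and the witness-choosing argument for existential quantification --- is exactly the standard folklore argument the author has in mind. The one step that genuinely needs care, selecting the witnesses $b_1,\dots,b_n$ \emph{before} applying $f$ so that $f(b_1,\dots,b_n)$ witnesses the image tuple, is handled properly.
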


\begin{definition}
A \emph{Maltsev polymorphism} of a digraph $G$ is any ternary polymorphism $m$ such that the following
equalities hold for all $x,y\in V(G)$:
\begin{align*}
m(x,y,y)&=x\\
m(x,x,y)&=y
\end{align*}
\end{definition}

\begin{definition}
A \emph{majority} is any polymorphism $M$ such that the following
equalities hold for all $x,y\in V(G)$:
\begin{align*}
M(x,y,y)&=y\\
M(y,x,y)&=y\\
M(y,y,x)&=y.
\end{align*}
\end{definition}

\begin{definition}
A digraph $G$ is \emph{Maltsev} if it has a Maltsev polymorphism. We say that $G$
\emph{has a majority} if there exists a majority polymorphism of $G$.
\end{definition}

During the proof we will need the following notation: Let $v$ be a vertex in a
digraph $G$. We then denote by $v^+$ the vertex set $\{u\in V(G):(v,u)\in E(G)\}$
and, similarly, by $v^-$ the vertex set $\{u\in V(G):(u,v)\in E(G)\}$. We will
occasionally extend the mappings $v^+$ and $v^-$ to whole sets of vertices.

We will call a vertex $v$ a \emph{source} if $v^-=\emptyset$ and a \emph{sink}
if $v^+=\emptyset$. If $v$ is neither a source nor a sink, we will call $v$
\emph{smooth}. For $G$ a digraph, let $S^-(G)$ be the set of all sources of $G$ and
$S^+(G)$ be the set of all sinks in $G$.

\section{Maltsev digraphs have majority}

We begin with an easy but fundamental observation:
\begin{observation}\label{obsCrucial}
Let $G$ be a Maltsev digraph. If $x,x',y,y'$ are (not necessarily all different)
vertices of $G$ and $(x,y),(x',y),(x',y')\in E(G)$ then also $(x,y')\in E(G)$ (see
Figure~\ref{figCrucial}).
\end{observation}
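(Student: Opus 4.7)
The plan is to prove this directly from the defining identities of the Maltsev polymorphism $m$, by applying $m$ coordinatewise to the three given edges. Because $m$ is a polymorphism, whenever $(a_1,b_1),(a_2,b_2),(a_3,b_3)\in E(G)$ we obtain $(m(a_1,a_2,a_3), m(b_1,b_2,b_3))\in E(G)$. So the strategy is to pick an ordering of the three edges $(x,y),(x',y),(x',y')$ such that applying the Maltsev identities $m(u,v,v)=u$ and $m(u,u,v)=v$ to the two coordinates simultaneously yields exactly the pair $(x,y')$.

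Concretely, I would apply $m$ to the triple of edges $(x,y)$, $(x',y)$, $(x',y')$ in this order. The first coordinates give $m(x,x',x')$, which equals $x$ by the identity $m(u,v,v)=u$. The second coordinates give $m(y,y,y')$, which equals $y'$ by the identity $m(u,u,v)=v$. Since the polymorphism property guarantees that the resulting pair is an edge, this immediately yields $(x,y')\in E(G)$.

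The main obstacle, insofar as there is one, is only to notice that the first coordinates $x,x',x'$ match the pattern $u,v,v$ while the second coordinates $y,y,y'$ simultaneously match the pattern $u,u,v$; once this alignment is observed, the conclusion follows in one line. No induction, factorization, or auxiliary construction is required at this stage; the observation is simply the elementary fact that a Maltsev operation forces digraph edges to ``close up'' across certain $2\times 2$ patterns, which will presumably be the basic tool for the more involved arguments later in the paper.
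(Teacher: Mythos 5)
Your proof is correct and is essentially identical to the paper's: both apply the Maltsev polymorphism to the edges $(x,y)$, $(x',y)$, $(x',y')$ in that order, obtaining $(m(x,x',x'),m(y,y,y'))=(x,y')\in E(G)$. Nothing to add.
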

\begin{proof}
Let $m$ be the Maltsev polymorphism of $G$. From the definition of a polymorphism we
get $(m(x,x',x'),m(y,y,y'))\in E(G)$, but $m(x,x',x')=x$ and $m(y,y,y')=y'$, so
$(x,y')\in E(G)$.
\end{proof}
\picture{The vertices and edges in Observation~\ref{obsCrucial}.}{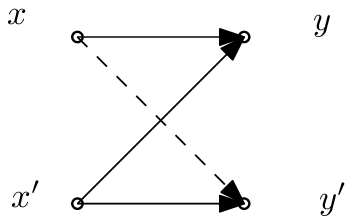}{figCrucial}

Motivated by this observation we give the following definition:
\begin{definition}
Call a digraph $G$ \emph{rectangular} if 
whenever $(x,y),(x',y),(x',y')\in E(G)$ then also $(x,y')\in E(G)$.
\end{definition}

All Maltsev digraphs are rectangular, however there are rectangular digraphs
that are not Maltsev (see Figure~\ref{figCounterexample}; we will later see that this
digraph violates Observation~\ref{obsPlus}).

\picture{A rectangular digraph that is not Maltsev.}{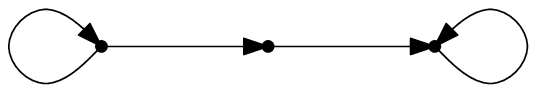}{figCounterexample}

\begin{definition}
Let $G$ be a digraph. We define two relations $\Rminus,\Rplus$ on $V(G)$ as follows:
\begin{align*}
x\Rminus y&\Leftrightarrow \exists z, (z,x),(z,y)\in E(G)\\
x\Rplus y&\Leftrightarrow \exists z, (x,z),(y,z)\in E(G).
\end{align*}
\end{definition}

Observe that the relations $\Rplus$ and $\Rminus$ are symmetric. Also, the
definition of $\Rplus$ and $\Rminus$ is a primitive positive one, so any
polymorphism of $G$ is compatible with $\Rplus$ and $\Rminus$.

The following lemma is actually a collection of easy observations:
\begin{lemma}\label{lemList}
Let $G$ be a rectangular digraph. Then the following holds:
\begin{enumerate}
\item If $v$ is a sink then there is no $x$ such that $x\Rplus v$.
\item If $v$ is a source then there is no $x$ such that $x\Rminus v$.
\item $\Rplus$ is an equivalence relation on $G\setminus S^+(G)$.
\item $\Rminus$ is an equivalence relation on $G\setminus S^-(G)$.
\item Whenever $x\Rplus y$, we have $x^+=y^+$ and $x^+$ is an equivalence class
of $\Rminus$.
\item Whenever $x\Rminus y$, we have $x^-=y^-$ and $x^-$ is an equivalence
class of $\Rplus$.
\item The mapping $\phi:X\mapsto X^+$ is a bijection from the set of equivalence
classes of $\Rplus$ to the set of equivalence classes of $\Rminus$.
\end{enumerate}
\end{lemma}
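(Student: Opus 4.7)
The plan is to dispatch the seven items in order, with rectangularity as the one tool doing virtually all the work. Items (1) and (2) are simply unpacking definitions: $x\Rplus v$ requires a common successor of $x$ and $v$, but a sink has none, and (2) is dual.

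For items (3) and (4) I would verify the three equivalence axioms. Symmetry of both relations is immediate from the definitions. Reflexivity of $\Rplus$ on $G\setminus S^+(G)$ uses that any such $v$ admits at least one successor $z$, so the pair $(v,z),(v,z)$ already witnesses $v\Rplus v$. Transitivity is the first real use of rectangularity: if $x\Rplus y$ via common successor $a$ and $y\Rplus z$ via common successor $b$, then the triple of edges $(x,a),(y,a),(y,b)$ fits the rectangular pattern and forces $(x,b)\in E(G)$, so $x$ and $z$ share the successor $b$. Item (4) follows by the same argument with arrows reversed.

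Items (5) and (6) are the technical heart. Given $x\Rplus y$ with witnessing common successor $z$ and any $w\in x^+$, applying rectangularity to $(y,z),(x,z),(x,w)$ yields $(y,w)\in E(G)$, so $x^+\subseteq y^+$; symmetry of $\Rplus$ then gives $x^+=y^+$. To see that $x^+$ is a single $\Rminus$-class, I would observe that any two elements of $x^+$ are $\Rminus$-related through the common predecessor $x$, and conversely, if $u\in x^+$ and $u\Rminus v$ via a common predecessor $z'$, then rectangularity applied to $(x,u),(z',u),(z',v)$ places $v$ inside $x^+$. Item (6) is dual.

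Item (7) then falls out. Item (5) makes $\phi$ well-defined on $\Rplus$-classes and shows its values are $\Rminus$-classes. For injectivity, if $\phi(X)=\phi(Y)$ then any $x\in X$ and $y\in Y$ share some successor $w$ in this common $\Rminus$-class, so $x\Rplus y$ and $X=Y$. For surjectivity, each $\Rminus$-class lies in $G\setminus S^-(G)$, so contains a vertex $v$ with some predecessor $u$; then by (5), $u^+$ is precisely the $\Rminus$-class of $v$. I expect the only real obstacle to be clerical: every invocation of rectangularity must match the template $(a,b),(c,b),(c,d)\Rightarrow(a,d)$, so one has to assign the roles carefully, especially when alternating between the $\Rplus$ and $\Rminus$ sides.
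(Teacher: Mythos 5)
Your proposal is correct and follows essentially the same route as the paper: rectangularity gives the rectangle-completion step that drives transitivity, the equality $x^+=y^+$, and the identification of $x^+$ as an $\Rminus$-class, with (7) then following formally (the paper exhibits the explicit inverse $Y\mapsto Y^-$ where you check injectivity and surjectivity separately, a cosmetic difference). All of your invocations of the rectangular template $(a,b),(c,b),(c,d)\Rightarrow(a,d)$ are correctly instantiated.
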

\begin{proof}
Parts (1) and (2) are easy: If $v^+=\emptyset$, there is no $z$ such that
$(v,z)\in E(G)$ and therefore $v$ can not be $\Rplus$-related to anything.
Similarly for the dual case.

We know that the relation $\Rplus$ is
symmetric on $V(G)\setminus S^+$. To prove reflexivity, consider $x\in
V(G)\setminus S^+$. As $x\not\in S^+$, there
exists $z\in V(G)$ such that $(x,z)\in E(G)$ and so $x\Rplus x$. 

From Observation~\ref{obsCrucial}, it follows that whenever $x\Rplus y$, we
have $x^+=y^+\neq\emptyset$. From this we can easily get transitivity of
$\Rplus$: If $x\Rplus y\Rplus z$, we have $x^+=z^+\neq\emptyset$ and so there
exists $t$ such that $(x,t),(z,t)\in E(G)$. Again, the proof of (4) is similar.

We already have half of (5), it remains to show that whenever
$x^+\neq\emptyset$, the set $x^+$ is an equivalence class of $\Rminus$.
Obviously, $x$ is a witness that all the vertices of $x^+$ are
$\Rminus$-related. If now $u\in x^+$ and $v\Rminus u$, then
Observation~\ref{obsCrucial} gives us that $(x,v)\in E(G)$ and so $v\in x^+$,
concluding the proof. Once more, the statement (6) is a dual version of (5).

To prove (7), observe that if $X$ is an equivalence class of $\Rplus$ then
$X=\left(X^+\right)^-$ and similarly whenever $Y$ is an equivalence class of
$\Rminus$, we have $Y=\left(Y^-\right)^+$. The mapping $\phi$ is invertible and
therefore is a bijection.
\end{proof}

Let $G$ be a rectangular digraph. Denote by $G^+$ the digraph whose vertices are the
equivalence classes of $\Rplus$ with $(X,Y)\in E(G^+)$ iff there exist vertices $x\in
X$ and $y\in Y$ in $V(G)$ such that $(x,y)\in E(G)$. Similarly, let $G^-$ be the 
digraph whose vertices are the $\Rminus$ equivalence classes with 
$(X,Y)\in E(G^-)$ iff there exist vertices $x\in X$ and $y\in Y$ such that
$(x,y)\in E(G)$.

\begin{lemma}\label{lemIsomorphism}
Let $G$ be a rectangular digraph and $\phi$ the mapping from part (7) of
Lemma~\ref{lemList}. Then  the mapping 
$\phi$ is an isomorphism of $G^+$ to $G^-$.
\end{lemma}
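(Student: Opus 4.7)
The plan is as follows. Lemma~\ref{lemList}(7) already gives that $\phi$ is a bijection on vertex sets, so it remains to verify that $(X,Y)\in E(G^+)$ if and only if $(X^+,Y^+)\in E(G^-)$. I will lean throughout on parts (5) and (7) of Lemma~\ref{lemList}, which together identify $x^+$ (for any $x\in X$) with the single $R^-$-class $X^+=\phi(X)$.

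For the forward direction, given an edge witness $(x,y)\in E(G)$ with $x\in X$ and $y\in Y$, Lemma~\ref{lemList}(5) yields $y\in x^+=X^+=\phi(X)$. Since the $R^+$-class $Y$ contains no sinks, $Y^+$ is nonempty; pick any $v\in y^+=Y^+=\phi(Y)$, and the edge $(y,v)\in E(G)$ witnesses $(\phi(X),\phi(Y))\in E(G^-)$.

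The reverse direction carries the real content. Suppose $(u,v)\in E(G)$ with $u\in X^+$ and $v\in Y^+$; I claim $u\in Y$. Indeed, $u$ is not a sink (since $v\in u^+$), so it lies in some $R^+$-class $U$. By Lemma~\ref{lemList}(5), $\phi(U)=U^+=u^+$, and $u^+$ is the unique $R^-$-class containing $v$, which is $Y^+$. Thus $\phi(U)=\phi(Y)$, and injectivity of $\phi$ forces $U=Y$, so $u\in Y$. Since also $u\in X^+$, we have $u\in x^+$ for every $x\in X$ by part (5), so any such $x$ gives an edge $(x,u)\in E(G)$ with $x\in X$ and $u\in Y$, witnessing $(X,Y)\in E(G^+)$.

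The main obstacle is merely the bookkeeping between $R^+$- and $R^-$-classes. The decisive ingredient is the injectivity of $\phi$: it lets us promote the raw edge $(u,v)$ in $G$ back to an edge whose endpoints sit in the prescribed $R^+$-classes $X$ and $Y$, rather than just in $\phi(X)$ and $\phi(Y)$.
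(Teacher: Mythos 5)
Your proof is correct and follows essentially the same route as the paper's: both arguments reduce the edge conditions in $G^+$ and $G^-$ to statements about the sets $\phi(X)=X^+$ via Lemma~\ref{lemList}(5) and the bijectivity of $\phi$. The paper packages this as a short chain of equivalences about nonempty intersections ($ (X,Y)\in E(G^+)$ iff $\phi(X)\cap Y\neq\emptyset$ iff $(\phi(X),\phi(Y))\in E(G^-)$), whereas you chase explicit edge witnesses in each direction, but the mathematical content is identical.
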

\begin{proof}
See Figure~\ref{figIsomorphism} for a picture of the proof.

First observe that we have $(X,Y)\in E(G^+)$ iff $X^+\cap Y\neq\emptyset$ in $G$. But
$X^+=\phi(X)$, therefore $XY\in E(G^+)$ iff $\phi(X)\cap Y\neq\emptyset$.
Similarly, $XY\in E(G^-)$ iff $X\cap \phi^{-1}(Y)\neq\emptyset$.

We know that $\phi$ is a bijection from $V(G^+)$ onto $V(G^-)$. We need to show
that $(X,Y)\in E(G^+)$ iff $(\phi(X),\phi(Y))\in E(G^-)$. However, we already have a chain
of equivalent statements: 
\begin{align*}
(X,Y)\in E(G^+)\\
\phi(X)\cap Y\neq\emptyset\\
\phi(X)\cap \phi^{-1}(\phi(Y))\neq\emptyset\\
(\phi(X),\phi(Y))\in E(G^-),
\end{align*}
which is precisely what we wanted.
\end{proof}

So far we have used only rectangularity. However, the following
observation is not true for rectangular digraphs (try it for the digraph in
Figure~\ref{figCounterexample}).

\begin{observation}\label{obsPlus}
Let $G$ be a Maltsev digraph. Then $G^+$ is also Maltsev.
\end{observation}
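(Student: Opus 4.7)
The plan is to lift the Maltsev polymorphism $m$ of $G$ to a polymorphism $m^+$ of $G^+$ in the most natural way: given three $\Rplus$-classes $X_1,X_2,X_3\in V(G^+)$, pick representatives $x_i\in X_i$ and define $m^+(X_1,X_2,X_3)$ to be the $\Rplus$-class of $m(x_1,x_2,x_3)$. Everything then reduces to three verifications: (i) this is well-defined, (ii) it is a polymorphism, (iii) it satisfies the Maltsev identities.

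For well-definedness, the key input is that $\Rplus$ is primitive-positive definable from $E(G)$, so by the proposition on pp-definable relations, $m$ is compatible with $\Rplus$: if $x_i\Rplus x_i'$ for $i=1,2,3$, then $m(x_1,x_2,x_3)\Rplus m(x_1',x_2',x_3')$. Hence different choices of representatives land in the same $\Rplus$-class. I also need $m(x_1,x_2,x_3)$ to actually lie in $V(G)\setminus S^+(G)$ so that it represents some class of $G^+$; but each $x_i$ is not a sink, hence $x_i\Rplus x_i$, and compatibility gives $m(x_1,x_2,x_3)\Rplus m(x_1,x_2,x_3)$, so $m(x_1,x_2,x_3)$ is not a sink either.

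For the polymorphism property, suppose $(X_i,Y_i)\in E(G^+)$ for $i=1,2,3$. By definition of $E(G^+)$, there exist $x_i\in X_i$ and $y_i\in Y_i$ with $(x_i,y_i)\in E(G)$. Since $m$ is a polymorphism of $G$, $(m(x_1,x_2,x_3),m(y_1,y_2,y_3))\in E(G)$, and this pair is a witness that $(m^+(X_1,X_2,X_3),m^+(Y_1,Y_2,Y_3))\in E(G^+)$. Finally, the Maltsev identities are immediate: for any $x\in X$ and $y\in Y$, the class $m^+(X,Y,Y)$ is the $\Rplus$-class of $m(x,y,y)=x$, which is $X$; similarly $m^+(X,X,Y)=Y$.

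I do not expect a serious obstacle here; the main substantive point is noticing that pp-definability of $\Rplus$ immediately gives compatibility of $m$ with $\Rplus$, which is exactly what makes the factor operation well-defined. The only mild subtlety to mention is the check that the result of $m$ stays outside $S^+(G)$, which as above follows from reflexivity of $\Rplus$ on non-sinks together with the compatibility just cited.
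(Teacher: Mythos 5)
Your proposal is correct and follows essentially the same route as the paper: the paper also defines $t(x/_{\Rplus},y/_{\Rplus},z/_{\Rplus})=m(x,y,z)/_{\Rplus}$, citing compatibility of $m$ with the pp-definable relation $\Rplus$ for well-definedness and leaving the polymorphism and Maltsev checks as ``a little thought.'' You simply spell out those checks (including the minor point that $m(x_1,x_2,x_3)$ is not a sink), which is a faithful elaboration rather than a different argument.
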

\begin{proof}
Consider the Maltsev polymorphism $m$ of $G$. Define the map $t$ on $G^+$ by
letting
\[
t(x/_{\Rplus},y/_{\Rplus},z/_{\Rplus})=m(x,y,z)/_{\Rplus}
\]
for $x,y,z$ vertices in $V(G)\setminus S^+$.

As the operation $m$ is compatible with the relation $\Rplus$, $t$ is
well-defined. Moreover, $t$ satisfies the Maltsev equations and a little thought
gives us that $t$ is a polymorphism of $G^+$. Therefore, $G^+$ is Maltsev.
\end{proof}

\picture{Picture proof of
Lemma~\ref{lemIsomorphism}.}{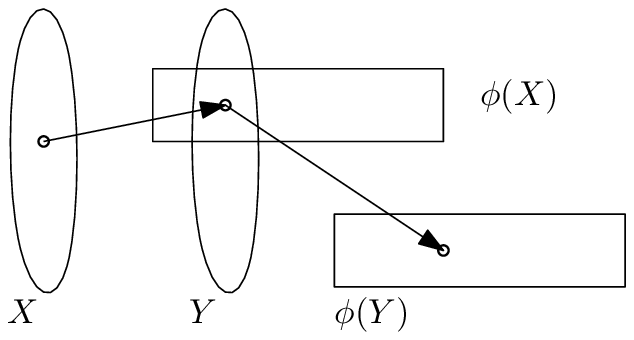}{figIsomorphism}
We are now ready to prove Theorem~\ref{thm1}.

\begin{theorem}\label{thm1}
Any Maltsev digraph has a majority polymorphism.
\end{theorem}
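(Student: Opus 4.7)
The plan is to argue by strong induction on $|V(G)|$. The empty and one-vertex digraphs admit any projection as a majority polymorphism, so the base case is immediate. For the inductive step the natural move is to factor through $G^+$: by Observation~\ref{obsPlus}, $G^+$ is again Maltsev, so provided $|V(G^+)|<|V(G)|$ the inductive hypothesis supplies a majority polymorphism $M^+$ on $G^+$, and what remains is to lift $M^+$ to a majority polymorphism $M$ on $G$.

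For the lift, note that for any triple $(x,y,z)$ of non-sinks, the value $M^+(x/_{\Rplus},y/_{\Rplus},z/_{\Rplus})$ already prescribes the $\Rplus$-class in which $M(x,y,z)$ is forced to lie. Moreover, by Lemma~\ref{lemList}(5), all vertices in a single $\Rplus$-class share the same out-neighbourhood, so the outgoing-edge constraints on $M$ are automatically satisfied at the factor level. The remaining job is to select a specific representative of the target class so that the three majority identities $M(x,y,y)=M(y,x,y)=M(y,y,x)=y$ hold and the incoming-edge constraints are respected as well. Here the Maltsev polymorphism $m$ of $G$ is available to do the selecting, and the hope is that a formula of the form $M(x,y,z)=m(\ldots)$, built out of $x$, $y$, $z$ together with a chosen representative of $M^+(x/_{\Rplus},y/_{\Rplus},z/_{\Rplus})$, will collapse via the Maltsev identities $m(u,v,v)=u$ and $m(u,u,v)=v$ to the required value whenever two of the inputs coincide.

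The main obstacle will be making this lift work simultaneously for all three majority identities while also handling incoming-edge constraints, since distinct vertices of a common $\Rplus$-class need not share an in-neighbourhood. To control the in-edge side I would appeal to Lemma~\ref{lemIsomorphism}: the isomorphism $\phi\colon G^+\to G^-$ lets the same $M^+$ be read as a majority polymorphism on $G^-$, so the dual of Lemma~\ref{lemList}(5) gives for incoming edges exactly what (5) gave for outgoing ones. A secondary technicality is what to do with sources and sinks, which the $\Rplus$-quotient ignores; I would either peel them off at the start of each inductive step or treat them as singleton $\Rplus$-classes and define $M$ on them by direct casework using $m$.

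It remains to close the induction in the case $|V(G^+)|=|V(G)|$, i.e.\ when $\Rplus$ is trivial on $V(G)\setminus S^+(G)$. Then by Lemma~\ref{lemList}(7) the relation $\Rminus$ is also trivial, so distinct non-sinks have disjoint out-neighbourhoods and distinct non-sources have disjoint in-neighbourhoods; it follows that every smooth vertex has in-degree and out-degree exactly one, and $G$ decomposes as a disjoint union of directed paths and cycles with possibly some sources and sinks attached. For such restricted Maltsev digraphs a majority polymorphism can be written down explicitly by treating each component separately, which handles this remaining base case and closes the induction.
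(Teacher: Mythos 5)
Your overall strategy --- induct on $|V(G)|$, pass to the Maltsev quotient $G^+$, transport the inductive majority $M^+$ to $G^-$ via the isomorphism $\phi$, and treat the case $|V(G^+)|=|V(G)|$ separately --- is exactly the paper's. But the step you yourself label ``the main obstacle'' is the heart of the proof, and you leave it as a hope rather than an argument. Concretely: once you decree that $M(x,y,z)$ must lie in the $\Rplus$-class $M^+(x/_{\Rplus},y/_{\Rplus},z/_{\Rplus})$ (to control outgoing edges) and simultaneously in the $\Rminus$-class $M^-(x/_{\Rminus},y/_{\Rminus},z/_{\Rminus})$ (to control incoming edges), you must prove these two classes intersect; a priori an $\Rplus$-class and an $\Rminus$-class can be disjoint, so the candidate set for $M(x,y,z)$ could be empty. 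Your proposed fix --- an unspecified formula $M(x,y,z)=m(\ldots)$ that ``will collapse via the Maltsev identities'' --- is not an argument, and it is not clear any such formula exists. The paper instead proves nonemptiness directly: since $x_i/_{\Rplus}\cap x_i/_{\Rminus}\neq\emptyset$, each pair $\left(\phi^{-1}(x_i/_{\Rminus}),x_i/_{\Rplus}\right)$ is an edge of $H^+$, and applying the polymorphism $M^+$ to these three edges produces an edge of $H^+$ whose translation back through $\phi$ says precisely that the prescribed $\Rminus$-class meets the prescribed $\Rplus$-class. Without this (or an equivalent) argument the lift is not constructed. (The majority identities are not where the difficulty lies: when two arguments coincide, both prescribed classes contain the repeated vertex, so all constraints are simultaneously satisfiable there.)

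Two smaller points. First, your description of the terminal case is off: $|V(G^+)|=|V(G)|$ forces $S^+(G)=S^-(G)=\emptyset$ in addition to all classes being singletons, so $G$ is a disjoint union of directed cycles --- no paths and no attached sources or sinks arise. Moreover, ``$\Rplus$ trivial on $V(G)\setminus S^+(G)$'' alone does \emph{not} imply ``$\Rminus$ trivial on $V(G)\setminus S^-(G)$'' (take one vertex with two out-neighbours: one singleton $\Rplus$-class, but one two-element $\Rminus$-class), so your appeal to Lemma~\ref{lemList}(7) needs the no-sinks hypothesis; that hypothesis does hold in the case you actually need. Second, sources and sinks require no peeling off: the $\Rplus$-constraint is only imposed when no argument is a sink and the $\Rminus$-constraint only when no argument is a source, and in the final compatibility check the premises $(x,x'),(y,y'),(z,z')\in E(H)$ guarantee exactly that $x,y,z$ are non-sinks and $x',y',z'$ are non-sources. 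This is easy but should be stated, since otherwise it is unclear why arbitrary values on the remaining triples cannot break the polymorphism property.
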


\begin{proof}
Let $H$ be a vertex-minimal digraph such that $H$ is Maltsev but has
no majority. We will show that this leads to a contradiction.

Let us first consider the case $|V(H^+)|=|V(H^-)|=|V(H)|$. This is only possible when $H$ has no
sources or sinks and every $\Rplus$ or $\Rminus$-class of $H$ is a singleton.
From Lemma~\ref{lemList} we obtain that $H$ is then the digraph of the permutation
$\phi$ and therefore is a disjoint union of directed cycles (we consider the
null digraph to be an empty union of directed cycles). It is easy to
verify that the mapping $M$ defined as
\[
M(x,y,z)=
\begin{cases}
y&\text{if $y=z$,}\\
x&\text{else,}\\
\end{cases}
\]
is a majority polymorphism of $H$.

We can thus assume that $|V(H^+)|<|V(H)|$. As $H$ is the smallest
counterexample and $H^+$ is Maltsev by Observation~\ref{obsPlus}, there exists a majority polymorphism $M^+$
of $H^+$. Denote by $M^-$ the polymorphism of $H^-$ conjugated to $M^+$ via
$\phi$, i.e. 
\[
M^-(x,y,z)=\phi(M^+(\phi^{-1}(x),\phi^{-1}(y),\phi^{-1}(z))).
\]

We now want to find a map $M(x,y,z)$ on $H$ so that the following holds:
\begin{enumerate}
\item $M(x,x,y)=M(x,y,x)=M(y,x,x)=x$.
\item If $x,y,z \not\in S^+$ then
$M(x,y,z)/_{\Rplus}=M^+(x/_{\Rplus},y/_{\Rplus},z/_{\Rplus})$.
\item If $x,y,z \not\in S^-$ then
$M(x,y,z)/_{\Rminus}=M^-(x/_{\Rminus},y/_{\Rminus},z/_{\Rminus})$.
\end{enumerate}
We will later prove that any such $M$ is a polymorphism, concluding the proof.
However, we will not explicitly demand $M$ to be a polymorphism for now.

We can construct $M$ by choosing, for each triple $(x,y,z)\in V(G)^3$ an image that
satisfies (1)--(3). However, we need to show that the candidate set is nonempty
for each choice of $(x,y,z)$.
 
As $M^+$ and $M^-$ are majority polymorphisms, the equalities (2) and (3)
follow from (1) whenever two of the variables $x,y,z$ are the same. Therefore, 
the only way that the value $M(x,y,z)$ can fail to exist is if for some 
$x_1,x_2,x_3$ vertices in $V(G)\setminus\left( S^+(G)\cup S^-(G)\right)$ we would have
\[
M^+(x_1/_{\Rplus},x_2/_{\Rplus},x_3/_{\Rplus})
\cap
M^-(x_1/_{\Rminus},x_2/_{\Rminus},x_3/_{\Rminus})=\emptyset.
\]

Can such a thing happen? We know that for $i=1,2,3$ the set 
$x_i/_{\Rplus}\cap x_i/_{\Rminus}$ is nonempty. Therefore, we have
\[
\phi(\phi^{-1}(x_i/_{\Rminus})) \cap x_i/_{\Rplus} \neq\emptyset
\]
and, as in the proof of Lemma~\ref{lemIsomorphism}, we obtain that
\[
\left(\phi^{-1}(x_i/_{\Rminus}),x_i/_{\Rplus}\right)\in E(H^+)
\]
for each $i=1,2,3$. Applying the polymorphism $M^+$, we obtain
\[
\left(M^+(\phi^{-1}(x_1/_{\Rminus}),\phi^{-1}(x_2/_{\Rminus}),\phi^{-1}(x_3/_{\Rminus})),M^+(x_1/_{\Rplus},x_2/_{\Rplus},x_3/_{\Rplus})\right)\in
E(H^+),
\]
but this is precisely the same as
\[
\phi(M^+(\phi^{-1}(x_1/_{\Rminus}),\phi^{-1}(x_2/_{\Rminus}),\phi^{-1}(x_3/_{\Rminus})))\cap
M^+(x_1/_{\Rplus},x_2/_{\Rplus},x_3/_{\Rplus})\neq\emptyset.
\]
Now recall the definition of $M^-$ to see that we have just shown that
\[
M^-(x_1/_{\Rminus},x_2/_{\Rminus},x_3/_{\Rminus})
\cap
M^+(x_1/_{\Rplus},x_2/_{\Rplus},x_3/_{\Rplus})\neq\emptyset.
\]
Therefore, there exists a map $M$ satisfying conditions (1)--(3). By (1) this
$M$ satisfies the majority equations. It
remains to show that this $M$ is in fact a polymorphism of $H$.

Let $(x,x'),(y,y'),(z,z')\in E(H)$. We want to show that
$(M(x,y,z),M(x',y',z'))\in E(H)$.
Obviously $x,y,z$ are not sinks and $x',y',z'$ are not sources. Moreover, a
little thought gives us that (see Figure~\ref{figObvious}):
\[
\phi^{-1}(x'/_{\Rminus})=x/_{\Rplus},\quad
\phi^{-1}(y'/_{\Rminus})=y/_{\Rplus},\quad
\phi^{-1}(z'/_{\Rminus})=z/_{\Rplus}.
\]
\picture{Showing that $M$ is a polymorphism.}{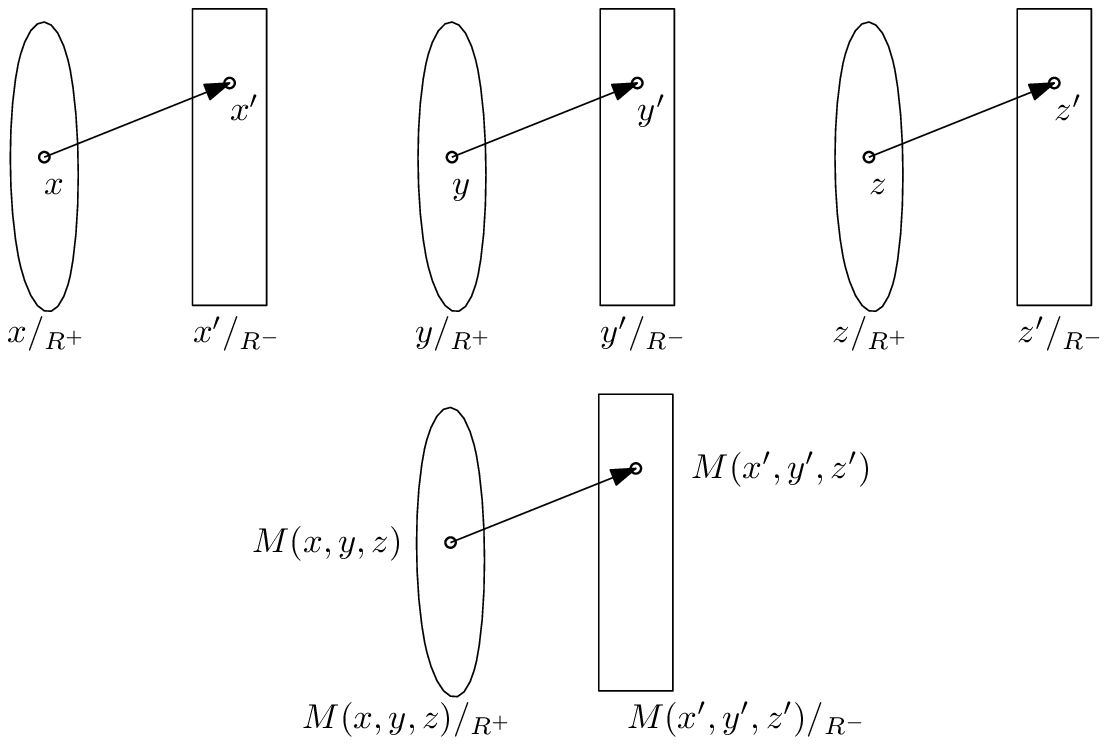}{figObvious}
But then 
\begin{align*}
M(x',y',z')/_{\Rminus}&=M^-(x'/_{\Rminus},y'/_{\Rminus},z'/_{\Rminus})\\
&=\phi(M^+(\phi^{-1}(x'/_{\Rminus}),\phi^{-1}(y'/_{\Rminus}),\phi^{-1}(z'/_{\Rminus})))\\
&=\phi(M^+(x/_{\Rplus},y/_{\Rplus},z/_{\Rplus})).
\end{align*}
Now observe that actually
$M(x,y,z)^+=\phi(M^+(x/_{\Rplus},y/_{\Rplus},z/_{\Rplus}))$. Putting the last
two equalities together, we obtain $M(x,y,z)^+=M(x',y',z')/_{\Rminus}$ which can only happen when we
have $(M(x,y,z),M(x',y',z'))\in E(H)$, concluding the proof.
\end{proof}

It is straightforward to translate Theorem~\ref{thm1} to the language of
universal algebra varieties (see \cite{burris} for background and details):
\begin{corollary}\label{corHSP}
If $V$ is a variety generated by the algebra of all polymorphisms of some
digraph $G$ then $V$ is congruence permutable iff $V$ is arithmetic.
\end{corollary}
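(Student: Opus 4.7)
The plan is to invoke the standard universal-algebra dictionary between congruence-theoretic properties of $V$ and existence of term operations on the generating algebra. Let $\mathbb{A}$ denote the algebra whose universe is $V(G)$ and whose basic operations are all polymorphisms of $G$, so that $V = \mathrm{HSP}(\mathbb{A})$. Since every polymorphism of $G$ is already a basic operation of $\mathbb{A}$, the term operations of $\mathbb{A}$ are precisely the polymorphisms of $G$, and consequently $V$ has a Maltsev (respectively, majority) term if and only if $G$ admits a Maltsev (respectively, majority) polymorphism.

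The direction ``arithmetic implies congruence permutable'' is immediate from the definition of arithmeticity. For the converse, I would assume $V$ is congruence permutable. By Maltsev's theorem, $V$ has a Maltsev term, which by the correspondence above is a Maltsev polymorphism of $G$, so $G$ is a Maltsev digraph. Theorem~\ref{thm1} then produces a majority polymorphism of $G$, which in turn is a majority term of $V$. By J\'onsson's theorem $V$ is therefore congruence distributive, and together with congruence permutability this gives arithmeticity. I do not anticipate any real obstacle here: once Theorem~\ref{thm1} is in hand the corollary reduces to routine bookkeeping, the only point worth checking carefully being the equivalence between terms of $V$ and polymorphisms of $G$, which holds because every polymorphism appears among the basic operations of $\mathbb{A}$.
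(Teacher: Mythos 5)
Your proof is correct and is exactly the ``straightforward translation'' the paper has in mind: Maltsev's theorem identifies congruence permutability with a Maltsev term (hence a Maltsev polymorphism of $G$), Theorem~\ref{thm1} supplies a majority polymorphism, and a majority term yields congruence distributivity, giving arithmeticity. The one point you flag---that term operations of the algebra of all polymorphisms are precisely the polymorphisms, since they form a clone---is indeed the only thing to check, and you handle it correctly.
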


Let us close this section with a description of the class of all Maltsev digraphs.

\begin{lemma}\label{lemIff}
Let $G$ be a rectangular digraph. Then $G$ is Maltsev iff $G^+$ is Maltsev.
\end{lemma}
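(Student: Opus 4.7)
The forward direction is immediate from Observation~\ref{obsPlus}, since any Maltsev digraph is automatically rectangular by Observation~\ref{obsCrucial} and its quotient $G^+$ inherits a Maltsev polymorphism.

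For the reverse direction, the plan is to mimic the construction used in the proof of Theorem~\ref{thm1}, only with the majority identities replaced by the Maltsev identities. Fix a Maltsev polymorphism $m^+$ of $G^+$ and transport it via $\phi$ to a Maltsev polymorphism $m^-$ of $G^-$ by the same conjugation formula $m^-(X,Y,Z)=\phi(m^+(\phi^{-1}(X),\phi^{-1}(Y),\phi^{-1}(Z)))$ that was used for $M^-$. I would then look for a ternary operation $m$ on $V(G)$ satisfying
\begin{enumerate}
\item $m(x,y,y)=x$ and $m(x,x,y)=y$ for all $x,y\in V(G)$;
\item $m(x,y,z)/_{\Rplus}=m^+(x/_{\Rplus},y/_{\Rplus},z/_{\Rplus})$ whenever none of $x,y,z$ is a sink;
\item $m(x,y,z)/_{\Rminus}=m^-(x/_{\Rminus},y/_{\Rminus},z/_{\Rminus})$ whenever none of $x,y,z$ is a source.
\end{enumerate}

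The main technical hurdle, as in Theorem~\ref{thm1}, is to show that these constraints are jointly satisfiable, i.e., that the candidate set is nonempty for every triple. If two of the entries coincide, the Maltsev identities pin down a value and (2), (3) hold automatically because $m^+$ and $m^-$ are themselves Maltsev. If only one of (2), (3) applies (for instance, some argument is a sink but none is a source), the corresponding class is nonempty and any representative works. The remaining case has all three entries distinct and smooth, where one must show $m^+(x/_{\Rplus},y/_{\Rplus},z/_{\Rplus})\cap m^-(x/_{\Rminus},y/_{\Rminus},z/_{\Rminus})\neq\emptyset$. This goes through by the very same argument as in Theorem~\ref{thm1}: each smooth $x_i$ witnesses $\phi^{-1}(x_i/_{\Rminus})\cap x_i/_{\Rplus}\neq\emptyset$, so $(\phi^{-1}(x_i/_{\Rminus}),x_i/_{\Rplus})\in E(G^+)$, and applying $m^+$ componentwise yields the required nonempty intersection.

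Finally, that any such $m$ is a polymorphism of $G$ is verified exactly as in Theorem~\ref{thm1}. For any edges $(x,x'),(y,y'),(z,z')\in E(G)$ the left-hand vertices are non-sinks and the right-hand vertices are non-sources, so both (2) and (3) apply; using $\phi^{-1}(x'/_{\Rminus})=x/_{\Rplus}$ together with the analogs for $y,z$, one deduces $(m(x,y,z))^+=\phi(m^+(x/_{\Rplus},y/_{\Rplus},z/_{\Rplus}))=m(x',y',z')/_{\Rminus}$, which forces $(m(x,y,z),m(x',y',z'))\in E(G)$. Combined with (1), this shows $m$ is a Maltsev polymorphism of $G$.
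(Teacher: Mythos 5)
Your proof is correct and is exactly the construction the paper itself invokes: the paper only sketches the reverse direction by saying one should repeat the construction from Theorem~\ref{thm1} with the Maltsev identities in place of the majority identities, and you have filled in the details faithfully (conjugate $m^+$ to $m^-$ via $\phi$, build $m$ satisfying the identities plus the two quotient constraints, check nonemptiness of the candidate sets, and verify the polymorphism condition edge by edge). One small imprecision: the Maltsev identities do \emph{not} pin down $m(x,y,x)$ when the outer entries coincide but differ from the middle one, so that case must also be routed through your nonempty-intersection argument rather than the ``value is forced'' case --- which is harmless, since that intersection argument never uses distinctness of the three smooth vertices.
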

\begin{proof}
We already know the ``$\Rightarrow$'' implication from Observation~\ref{obsPlus}.

On the other hand, if $m^+$ is a Maltsev polymorphism of $G^+$, we can use a
construction similar to the one from the proof of Theorem~\ref{thm1} to obtain
a Maltsev polymorphism $m$ of $G$.
\end{proof}

From this lemma we see that if we start from a
disjoint union of directed cycles $G_0$ and then in each step choose a
rectangular digraph $G_{i+1}$ so that $\left(G_{i+1}\right)^+=G_i$, all the
graphs $G_0,G_1,\dots$ will be Maltsev. Moreover, every Maltsev digraph can
be obtained in this way (with a suitable choice of the sequence $G_0,G_1,\dots,
G_n$) because every Maltsev digraph becomes a
disjoint union of directed cycles ofter applying the $^+$ operation
sufficiently many times. 

Let us state our findings in a more compact form:

\begin{corollary}\label{corCharacterise}
The class of all Maltsev digraphs $\M$ is the smallest class of digraphs such that:
\begin{enumerate}
\item All digraphs in $\M$ are rectangular,
\item $\M$ contains all disjoint unions of directed cycles and all edgeless
digraphs,
\item $\M$ is closed under taking the preimages under the map $G\mapsto G^+$
(i.e. if $H\in \M$ and $G$ is rectangular such that $G^+=H$ then $G\in\M$).
\end{enumerate}
\end{corollary}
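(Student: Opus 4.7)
The plan is to prove the two inclusions separately: first that the class $\M$ of Maltsev digraphs itself satisfies properties (1)--(3), and second that every class satisfying (1)--(3) already contains $\M$.

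For the forward direction, property (1) is exactly Observation~\ref{obsCrucial}, while property (3) is precisely Lemma~\ref{lemIff}. Property (2) only needs a brief construction of witnesses. On a single directed cycle, identified with $\mathbb{Z}/n$, the operation $m(x,y,z) = x - y + z$ is Maltsev and clearly compatible with the edge relation. For a disjoint union of cycles I extend $m$ to triples spanning different cycles by declaring $m(x,y,z) = z$ whenever $x = y$ and $m(x,y,z) = x$ whenever $y = z$ (consistent, since the two rules agree when $x = y = z$), and making an arbitrary choice otherwise; no edge constraints apply across distinct cycles, so this $m$ is a Maltsev polymorphism. Edgeless digraphs admit the same construction vacuously, since they impose no edge constraints at all.

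For the reverse direction, let $\M'$ be any class satisfying (1)--(3) and I proceed by induction on $|V(G)|$ for $G \in \M$. The base case $|V(G)| = 0$ is covered by (2), since the null digraph is edgeless. Otherwise two subcases arise exactly as in the proof of Theorem~\ref{thm1}. If $|V(G^+)| < |V(G)|$, then $G^+$ is Maltsev by Observation~\ref{obsPlus} and strictly smaller, so $G^+ \in \M'$ by the inductive hypothesis; since $G$ is rectangular by Observation~\ref{obsCrucial}, property (3) places $G$ in $\M'$. If instead $|V(G^+)| = |V(G)|$, then the analysis inside the proof of Theorem~\ref{thm1} shows that $G$ has no sources or sinks and every $\Rplus$- and $\Rminus$-class is a singleton, so that via the isomorphism $\phi$ of Lemma~\ref{lemIsomorphism} the digraph $G$ is the graph of a permutation, i.e.\ a disjoint union of directed cycles, and hence lies in $\M'$ by (2).

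The only substantive step is the termination dichotomy $|V(G^+)| < |V(G)|$ versus $|V(G^+)| = |V(G)|$, but this is already packaged in the proof of Theorem~\ref{thm1}; the remainder is pure bookkeeping that reuses Observation~\ref{obsCrucial}, Observation~\ref{obsPlus}, and Lemma~\ref{lemIff}.
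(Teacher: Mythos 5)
Your overall architecture---checking that the class of Maltsev digraphs satisfies (1)--(3) via Observation~\ref{obsCrucial} and Lemma~\ref{lemIff}, and then an induction on $|V(G)|$ driven by the dichotomy $|V(G^+)|<|V(G)|$ versus $|V(G^+)|=|V(G)|$---is exactly the argument the paper sketches in the paragraph preceding the corollary, and your reverse direction is correct as written. There is, however, a genuine flaw in your verification of property (2). The polymorphism condition couples a triple $(x,y,z)$ coordinatewise with the triple of successors $(x^{+},y^{+},z^{+})$, and this constraint is in force even when $x$, $y$, $z$ lie in three different cycles; it is simply not true that ``no edge constraints apply across distinct cycles.'' Concretely, take two disjoint directed $2$-cycles $\{a,b\}$ and $\{c,d\}$. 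The triple $(a,b,c)$ has pairwise distinct entries, so your rule makes an arbitrary choice, say $m(a,b,c)=a$; the successor triple $(b,a,d)$ again has pairwise distinct entries, so another arbitrary choice is made, say $m(b,a,d)=d$. Compatibility would then require $(a,d)$ to be an edge, which it is not. So the map you describe need not be a polymorphism, and the justification offered for it is based on a misreading of the compatibility condition.

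The gap is easy to close. In a disjoint union of directed cycles every vertex has a unique successor, so $x=y$ if and only if $x^{+}=y^{+}$. Hence the single global rule ``$m(x,y,z)=z$ if $x=y$, and $m(x,y,z)=x$ otherwise'' is well defined on the whole vertex set, satisfies $m(x,y,y)=x$ and $m(x,x,y)=y$, and is compatible with the edge relation because the case distinction $x=y$ versus $x\neq y$ is preserved when passing to successors; this mirrors the majority operation used for the same digraphs in the proof of Theorem~\ref{thm1}. (Edgeless digraphs are handled vacuously, as you say.) With that one replacement your proof is complete and coincides with the paper's intended argument.
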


Note: We explicitly mention edgeless digraphs in part
(2) so that the corollary is true even if we disallow the null digraph.

\section{Maltsev digraphs and the CSP}
We conclude our paper with a note about the connections with the Constraint
Satisfaction Problem. However, we
must first introduce two new notions: adding constants and the Datalog language.

Observe that both majority and Maltsev polymorphisms preserve the unary
constant relation $c_v=\{(v)\}$ for every $v\in V(G)$ (because
$m(v,v,v)=M(v,v,v)=v$).
Therefore, we can ``enhance'' any Maltsev digraph $G$ by adding one constant relation for every
$v\in V(G)$. Call the resulting relational structure $G_c$. Observe that
$\CSP(G_c)$ is essentially the problem of determining whether a given partial
mapping $V(H)\to V(G)$ can be extended to a digraph homomorphism $H\to G$. 
It is not difficult to observe that $\CSP(G_c)$ is at least as hard as
$\CSP(G)$. To make our results more meaningful, we will now be talking about the
complexity of $\CSP(G_c)$.

There is an important class of CSPs that can be solved using the Datalog
language or some subset thereof (note that ``can be solved'' actually means
that the \emph{complements} of these problems lie in the $\Datalog$ class,
however that is just a technicality).  Another name for such CSPs is
problems of \emph{bounded width} (see \cite{libor} for an overview).

Putting Theorem~\ref{thm1} together with known body of knowledge about Datalog, we obtain that if $G$
is a Maltsev digraph then $\CSP(G_c)$ can be solved using a rather simple kind
of consistency test in logarithmic space.
 
As shown in \cite{DalmauKrokhin}, if $G$ admits a majority polymorphism, then $\CSP(G_c)$
can be solved using linear Datalog (in nondeterministic logarithmic space). On the
other hand, Maltsev polymorphism in general relational structure $G$ does
\emph{not} guarantee that there is a Datalog solution to $\CSP(G_c)$. However, if
$G$ is actually a digraph, then Maltsev implies majority by Theorem~\ref{thm1}
and hence there exists a linear Datalog solution. We can improve this statement further,
as \cite{DalmauLarosse} tells us that in this case it is enough to use the
so-called symmetric Datalog, ensuring that $\CSP(G_c)$ is solvable in \emph{deterministic}
logarithmic space. 

\section{Conclusions and open problems} 
Digraphs are a rather versatile structures that can often ``emulate'' other
structures in various ways (see eg. \cite{nesetril} or Section 5 of \cite{FV}).
Our result, however, shows that sometimes digraphs are not general enough: In
general relational structures, Maltsev and majority polymorphisms are
independent of each other while in digraphs Maltsev implies majority.

Therefore, we would like to know what makes digraphs behave like this. And,
perhaps more importantly, what other implications of this kind (i.e. ``If $G$
has a polymorphism $s$ then $G$ has a polymorphism $t$.'') hold for digraphs but
not for general relational structures? 

Finally, two more direct future tasks spring to mind: First, to characterize all
Maltsev digraphs in a more explicit way than Corollary~\ref{corCharacterise} and, returning to combinatorics, to
count the number of all Maltsev digraphs on $n$ vertices (or at least give some
asymptotics).

\section{Acknowledgements}
Supported by the GA\v CR project GA\v CR 201/09/H012. Thanks to 
Jakub Bul\'\i n, Micha\l{} Stronkowski and especially to Libor Barto for valuable input.

\bibliographystyle{amsplain}
\bibliography{citations}

\providecommand{\bysame}{\leavevmode\hbox to3em{\hrulefill}\thinspace}
\providecommand{\MR}{\relax\ifhmode\unskip\space\fi MR }
% \MRhref is called by the amsart/book/proc definition of \MR.
\providecommand{\MRhref}[2]{%
  \href{http://www.ams.org/mathscinet-getitem?mr=#1}{#2}
}
\providecommand{\href}[2]{#2}
\begin{thebibliography}{1}

\bibitem{libor}
Libor Barto and Marcin Kozik, \emph{Constraint satisfaction problems of bounded
  width}, Proceedings of the 50th Annual IEEE Symposium on Foundations of
  Computer Science (2009), 595--603.

\bibitem{BJK}
Andrei Bulatov, Peter Jeavons, and Andrei Krokhin, \emph{Classifying the
  complexity of constraints using finite algebras}, SIAM J. Comput. \textbf{34}
  (2005), no.~3, 720--742, doi:10.1137/S0097539700376676.

\bibitem{burris}
Stanley Burris and H.~P. Sankappanavar, \emph{A course in universal algebra},
  graduate texts in mathematics ed., Springer, New York, NY, USA, 1981, the
  Millenium Edition (electronic book).

\bibitem{Dalmau}
Victor Dalmau, \emph{Linear datalog and bounded path duality for relational
  structures}, Log. Meth. Comput. Sci. \textbf{1} (2005), no.~1, {doi:
  10.2168/LMCS-1(1:5)2005}.

\bibitem{DalmauKrokhin}
Victor Dalmau and Andrei Krokhin, \emph{Majority constraints have bounded
  pathwidth duality}, Eur. J. Combin. \textbf{29} (2008), no.~4, 821--837, doi:
  10.1016/j.ejc.2007.11.020.

\bibitem{DalmauLarosse}
Victor Dalmau and Benoit Larose, \emph{Maltsev + {D}atalog {$\Rightarrow$}
  {S}ymmetric {D}atalog}, LICS '08: Proceedings of the 2008 23rd Annual IEEE
  Symposium on Logic in Computer Science (Washington, DC, USA), IEEE Computer
  Society, 2008, doi: 10.1109/LICS.2008.14, pp.~297--306.

\bibitem{FV}
Tom\'as Feder and Moshe~Y. Vardi, \emph{The computational structure of monotone
  monadic {SNP} and constraint satisfaction: a study through {D}atalog and
  group theory}, SIAM J. Comput. \textbf{28} (1999), no.~1, 57­--104.

\bibitem{jeavons}
Peter Jaevons, \emph{On the algebraic structure of combinatorial problems},
  Theor. Comput. Sci. \textbf{200} (1998), 185--204.

\bibitem{nesetril}
Jaroslav~Ne\v set\v ril and Pavol Hell, \emph{Graphs and homomorphisms}, Oxford
  Lecture Series in Mathematics and Its Applications, Oxford University Press,
  New York, NY, USA, 2004.

\end{thebibliography}
\end{document}